\newcommand{\gskip}{\vspace{12pt}}
\newtheorem{thm}{Theorem}[section]
\newtheorem{lem}[thm]{Lemma}
\theoremstyle{definition}
\theoremstyle{remark}
\numberwithin{equation}{subsection}
\newcommand{\Rmnum}[1]{\uppercase\expandafter{\romannumeral #1}}  %定义命令输入大写罗马数字
\begin{document}
\title{Comments on the infinitely divisibility of the Conway--Maxwell--Poisson distribution}

\author{ Greg Markowsky, Preet Patel\\
	{\tt gmarkowsky@gmail.com} ~~{\tt ppat0019@student.monash.edu} \\
	Department of Mathematics,  Monash University, Australia
}
%\maketitle

\maketitle
\begin{abstract}
	In an elegant recent paper \cite{geng2022conway}, Geng and Xia settled the question of the infinite divisibility of the Conway--Maxwell--Poisson distribution, using in large part several results from complex analysis. In this note we show how these complex analytic methods can be circumvented, thereby giving a proof of their result which is completely elementary.
\end{abstract}

%\section{Introduction}

The Conway--Maxwell--Poisson distribution is a discrete probability distribution on the nonnegative integers which admits the probability mass function

\begin{equation} \label{pmf}
P(X=k) = \frac{1}{Z(\lambda,\nu)}\frac{\lambda^k}{(k!)^\nu}.
\end{equation}

Here $\lambda, \nu >0$, or $\nu=0$ and $0<\lambda<1$, and $Z(\lambda,\nu)$ is the correct constant so that $\sum_{k=0}^\infty P(X=k) = 1$. This distribution has been many uses in the modelling of count data and other applications; see \cite{daly2015conway, hilbe2014modeling, li2020some, sellers2012poisson, yip2021forecasting, pogany2016integral}. Recently, in \cite{geng2022conway}, the following result was proved. 

\begin{thm}
The distribution defined by (\ref{pmf}) is infinitely divisible if and only if $\nu = 0$ or $\nu=1$.
\end{thm}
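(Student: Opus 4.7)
The plan is the following. First I would use the standard criterion that a discrete distribution on $\mathbb{Z}_{\geq 0}$ with $P(X=0)>0$ is infinitely divisible if and only if, writing its probability generating function as $G(z)$, every coefficient of $z^k$ in $\log G(z)$ for $k\geq 1$ is non-negative (this is just the compound-Poisson representation). For the CMP distribution, $G(z)=f_\nu(\lambda z)/f_\nu(\lambda)$ where $f_\nu(x)=\sum_{k\geq 0} x^k/(k!)^\nu$, so if we write $\log f_\nu(x) = \sum_{k\geq 1} l_k(\nu)\,x^k$, the condition reduces to $l_k(\nu)\geq 0$ for every $k\geq 1$, independently of $\lambda$. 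The cases $\nu=0$ (where $\log f_0(x)=\sum_{k\geq 1}x^k/k$, giving the geometric distribution) and $\nu=1$ (where $\log f_1(x)=x$, giving the Poisson distribution) trivially satisfy this, so the work is to prove that $l_k(\nu)<0$ for some $k$ whenever $\nu\in(0,1)\cup(1,\infty)$.

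The case $\nu>1$ is immediate: we always have $l_1(\nu)=1$, so if $l_k(\nu)\geq 0$ held for all $k\geq 2$ as well, then $\log f_\nu(x)\geq x$ on $[0,\infty)$ and so $f_\nu(x)\geq e^x$; but for $\nu>1$ we have $(k!)^\nu>k!$ for each $k\geq 2$, giving $f_\nu(x)<e^x$ termwise, a contradiction. (A direct computation also shows $l_2(\nu)=2^{-\nu}-\tfrac12<0$ in this range.)

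The range $0<\nu<1$ is the real content of the theorem; there the analogous inequality goes the wrong way and more is needed. The approach I would take is to exploit the real-variable growth of $f_\nu$. A saddle-point/Laplace estimate based solely on Stirling's formula shows
\[
\log f_\nu(x) \;=\; \nu\,x^{1/\nu} \;+\; \frac{1-\nu}{2\nu}\,\log x \;+\; O(1) \qquad (x\to\infty).
\]
Assume for contradiction that $l_k(\nu)\geq 0$ for every $k\geq 1$, and set $L(x):=\log f_\nu(x)=\sum l_k x^k$. Because all $l_k$ are non-negative, $L(x)\geq l_k x^k$ for each $k$ and every $x\geq 0$. If some integer $k>1/\nu$ had $l_k>0$, then $L(x)/x^{1/\nu}\geq l_k x^{k-1/\nu}\to\infty$, contradicting $L(x)\sim \nu x^{1/\nu}$; hence $l_k=0$ for every integer $k>1/\nu$, so $L$ is a polynomial of degree at most $\lfloor 1/\nu\rfloor$. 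When $1/\nu\notin\mathbb{Z}$ this forces $\deg L<1/\nu$, whence $L(x)/x^{1/\nu}\to 0\ne\nu$, a contradiction. When $1/\nu=d\in\mathbb{Z}_{\geq 2}$, the leading asymptotic forces $l_d=1/d$, so $L(x)-x^d/d$ is a polynomial of degree $\leq d-1$; the refined asymptotic identifies it with $\tfrac{d-1}{2}\log x+O(1)$, and no non-constant polynomial has logarithmic growth, so $L(x)=x^d/d+\text{const}$, forcing $l_1=0$ and contradicting $l_1=1$.

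The main obstacle is clearly the asymptotic displayed above. Its leading order $\log f_\nu(x)\sim \nu x^{1/\nu}$ is essentially immediate from comparing $f_\nu(x)$ with its largest term near $k\approx x^{1/\nu}$, but isolating the $\log x$ correction with its explicit coefficient — and this is precisely what rules out the residual case $\nu=1/d$ for $d\geq 2$ — requires a careful real-variable Laplace estimate around that saddle point. This is nonetheless a purely real-analytic calculation, so the whole argument bypasses the complex-analytic tools (Pringsheim's theorem, Hadamard's factorization, control of zeros of entire functions of prescribed order) that Geng and Xia use.
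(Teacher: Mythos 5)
Your route is genuinely different from the paper's. The note above keeps Geng and Xia's overall architecture --- infinite divisibility forces a compound Poisson representation $e^{\mu G(x)}=\sum_k(\lambda x)^k/(k!)^\nu$, their probabilistic arguments supply the growth bound $|G(x)|\le C|x|^d$, and the two lemmas merely replace the complex-analytic steps showing $G$ is entire and then a polynomial. You instead invoke the coefficient criterion for infinite divisibility on $\mathbb{Z}_{\ge 0}$, reduce everything to the signs of the coefficients $l_k(\nu)$ of $\log f_\nu$, kill $\nu>1$ with $l_2=2^{-\nu}-\tfrac12<0$, and replace the probabilistic growth bound by real-variable asymptotics of $\log f_\nu(x)$ as $x\to\infty$. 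If completed, this is fully self-contained and bypasses not only the complex analysis but also the probabilistic portion of the original proof; note, however, that your step ``non-negative coefficients plus a power growth bound force a polynomial'' is exactly Lemma 3 reproved inline, and your identification of $\log f_\nu$ with an everywhere-convergent series is exactly Lemma 2, so in the hard case the mechanism is the same as here, with the asymptotics of $f_\nu$ playing the role of $|G(x)|\le C|x|^d$.

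Two points need repair, both fixable. First, when you set $L(x)=\log f_\nu(x)=\sum_k l_k x^k$ and then use $L(x)\ge l_k x^k$ for \emph{every} $x\ge 0$, you are implicitly asserting that the series $\sum_k l_k x^k$ converges to $\log f_\nu(x)$ on all of $[0,\infty)$; a priori it is only defined near $0$. This is precisely Lemma 2, and under your hypothesis $l_k\ge 0$ it has a two-line proof (expand $e^{\sum_k l_k x^k}=f_\nu(x)$ near $0$ and compare coefficients to get $l_k\le 1/(k!)^\nu$, so the series is entire and the identity propagates), but it must be said, since otherwise the inequality you lean on is unavailable for large $x$. Second, the displayed asymptotic is asserted rather than proved, and for $\nu=1/d$, $d\ge 2$, it carries the whole argument (the leading term alone cannot exclude $L(x)=x+x^d/d$). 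The good news is that you have overestimated what is needed: the explicit coefficient $\tfrac{1-\nu}{2\nu}$ of $\log x$ is irrelevant, and $\log f_\nu(x)=\nu x^{1/\nu}+O(\log x)$ suffices. Indeed, with that bound $L$ is a polynomial of degree at most $\lfloor 1/\nu\rfloor$, the limit $L(x)/x^d\to\nu$ pins $l_d=1/d$, and then $\sum_{k=1}^{d-1}l_k x^k=O(\log x)$ contradicts $l_1=1$. This $O(\log x)$ version follows from elementary bookkeeping: the maximal term of $\sum_k x^k/(k!)^\nu$ has logarithm $\nu x^{1/\nu}-\tfrac12\log x+O(1)$ by Stirling, and the full sum exceeds it by at most a factor $Cx^{1/\nu}$ (terms beyond $k=2x^{1/\nu}$ decay geometrically), so no saddle-point or Laplace refinement is required. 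As written, though, the case $\nu=1/d$ rests on an unproved estimate, so the proposal is incomplete until that paragraph is supplied.
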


%The purpose of this letter is to provide an alternative proof of Lemma 2 of the paper \textit{When Is the Conway-Maxwell-Poisson Distribution Infinitely Divisible?}

To understand the importance of this result in connection with limit theory and the law of small numbers, the reader is referred to the discussion in \cite{geng2022conway}. The proof given there used complex analytic methods in several places. In this brief note we will show how these methods can be avoided if desired.

\gskip

Let us begin by assuming that $X$ has distribution $CMP(\lambda,\mu)$; that is, with probability mass function given by (\ref{pmf}). If we assume that it is infinitely divisible, then, as noted by the authors of \cite{geng2022conway}, it must be compound Poisson, and there is an i.i.d. sequence $Y_1, Y_2, \dots$ of r.v.'s such that

$$
X \stackrel{d}{=} \sum_{n=1}^N Y_n,
$$

where $N \sim Pois(\mu)$ $(\mu>0)$ is independent of the $Y$'s. If the probability generating function of $Y_1$ is denoted by $G(x)$, that is, 

\begin{equation} \label{genfunc}
    G(x) = \sum_{r=1}^\infty P(Y_1=r) x^r,
\end{equation}

then 

\begin{equation} \label{entire?}
    e^{\mu G(x)} = \sum_{k=0}^\infty \frac{(\lambda x)^k}{(k!)^\nu}.
\end{equation}

Now, an interesting feature of (\ref{entire?}) is that the radius of convergence of the right side is clearly infinite, while the left side is only guaranteed to exist for $|x|<1$. The authors of \cite{geng2022conway} required at this step the knowledge that the series (\ref{genfunc}) has an infinite radius of convergence, and for this they turned to complex analysis, regarding $x$ as a complex variable. The argument required substantial subtlety, as for general power series we may have intricate cancellation between the terms; however, matters may be simplified here by noting that all series in question have non-negative coefficients, precluding difficulties due to cancellation, and the complex arguments may be replaced by the following lemma.

\begin{lem}
Suppose $(q_n)_n$ is a sequence of non-negative reals with $\sum_{n=0}^\infty q_n < \infty$. Define
\begin{align} \label{defG}
    G(x)=\sum_{n=0}^{\infty} q_n x^n
\end{align}
and let $F(x)=e^{\mu G(x)}$ for some $\mu>0$, defined wherever the series converges. Then $F$ is analytic, and has a power series expansion $F(x) = \sum_{n=0}^{\infty} t_n x^n$ near zero. Suppose this series for $F$ has an infinite radius of convergence. Then the series defining $G$ also has an infinite radius of convergence.
\end{lem}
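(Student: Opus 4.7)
My plan is to exploit non-negativity of all coefficients involved and compare coefficients term by term, bypassing any complex-analytic or radius-of-convergence estimate based on Hadamard's formula.

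First, I would record the easy observation that since $\sum_n q_n<\infty$, the series defining $G$ converges absolutely on $\{|x|\le 1\}$, and in particular $G$ has a positive radius of convergence, so the identity $F(x)=e^{\mu G(x)}$ actually makes analytic sense near zero and $F$ does have a genuine power series $\sum_n t_n x^n$ there. Next, I would expand
\begin{equation*}
F(x)=\sum_{k=0}^{\infty}\frac{\mu^k}{k!}\,G(x)^k,
\end{equation*}
and observe that each power $G(x)^k = \sum_n a_n^{(k)}x^n$ has non-negative coefficients $a_n^{(k)}$, because all $q_n$ are non-negative. Tonelli's theorem applied to the non-negative double series then justifies rearrangement and yields
\begin{equation*}
t_n=\sum_{k=0}^\infty \frac{\mu^k}{k!}\,a_n^{(k)}.
\end{equation*}

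The key step, and really the whole proof, is now to throw away every term except $k=1$, which reads $a_n^{(1)}=q_n$. This gives the coefficientwise lower bound
\begin{equation*}
t_n \;\ge\; \mu\,q_n,\qquad n\ge 0.
\end{equation*}
Since by hypothesis $\sum_n t_n x^n$ has infinite radius of convergence, the series $\sum_n q_n x^n$ is dominated termwise (up to the factor $1/\mu$) by one that converges for every real $x>0$, so it converges for every $x\in\mathbb{R}$. Hence $G$ has infinite radius of convergence as claimed.

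\textbf{Where the difficulty lies.} The only nontrivial point is to legitimize the identity $t_n=\sum_k (\mu^k/k!)\,a_n^{(k)}$, i.e.\ to interchange the $k$-sum and the $n$-sum hidden in $e^{\mu G(x)}$. Because every term is non-negative this is automatic from Tonelli, and without the non-negativity assumption the argument would require genuine complex-analytic input — precisely the subtlety that the authors of \cite{geng2022conway} had to navigate. Once the interchange is granted, the bound $t_n \ge \mu q_n$ is immediate, and the comparison test finishes the proof in one line.
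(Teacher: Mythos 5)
Your proof is correct and follows essentially the same route as the paper's: expand $e^{\mu G(x)}$, use non-negativity of all terms to justify the rearrangement into a power series, keep only the $k=1$ term to obtain the coefficientwise bound $t_n \ge \mu q_n$, and conclude by comparison with the series for $F$. The only cosmetic difference is that the paper phrases the argument as a contradiction with a finite radius $R$, while you argue directly; the substance is identical.
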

%\section{Statement of Lemma}

\begin{proof}
The $\mu$ can be absorbed into $G$, and the coefficients remain non-negative, so we disregard $\mu$. Suppose (\ref{defG}) has a finite radius of convergence, $R$ (note that $R\geq 1$ since the $q_n$'s are summable). For $0<x<R$, we have
\begin{align} \label{rearrange}
\sum_{n=0}^{\infty} q_n x^n = G(x) < e^{G(x)}& =\sum_{k=0}^{\infty} \frac{\left(\sum_{n=0}^{\infty} q_n x^n \right)^k}{k!}
\end{align}

Since all terms in this double series are positive and the series converges absolutely and uniformly near 0, we may expand and rearrange without being cautious, and we recall that the series on the right must rearrange to $\sum_{k=0}^{\infty} t_k x^k$. We conclude that each $t_m$ can be expressed as a linear combination of the $q_n$'s with positive coefficients, and this linear combination must contain the term $q_m$ from the $k=1$ term in (\ref{rearrange}). We conclude that $q_m \leq t_m$, and this is enough to show that $R= \infty$. 
\end{proof}

The authors of \cite{geng2022conway} then proceeded through a series of probabilistic arguments in order to show that $|G(x)| \leq C|x|^d$ for some $C, d > 0 $ ($d$ is an integer) and $|x|$ sufficiently large. As viewed in the complex plane, $G$ is now known to be entire, and it is a standard exercise in complex analysis classes (generalizing Liouville's Theorem) to show that this implies that $G$ is a polynomial of degree $d$. This result can be deduced from Cauchy's Integral Formula or from a number of other closely related results. However, again in our case the fact that all coefficients are non-negative allows this argument to be replaced by the following lemma.

\begin{lem}
    Suppose $G(x)=\sum_{n=0}^{\infty} q_n x^n$, is a power series with $q_n \geq 0$ for all $n$. Suppose also that this series has infinite radius of convergence and satisfies $|G(x)| \leq C|x|^d$ for some $C, d > 0$ and $|x|$ sufficiently large. Then $G$ is a polynomial of degree at most $d$.
\end{lem}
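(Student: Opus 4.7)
The plan is to exploit non-negativity of the coefficients together with the absolute value bound evaluated along the positive real axis, bypassing any Cauchy integral formula argument.

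First I would restrict attention to real positive $x$. Because all $q_n \geq 0$, for $x>0$ every partial sum $\sum_{n=0}^{N} q_n x^n$ is bounded above by $G(x)$, and also $G(x) = |G(x)|$ since $G(x)$ is a non-negative real number. Hence the hypothesis gives a genuine termwise bound: for $x$ large enough (say $x \geq x_0$),
\begin{equation*}
q_n x^n \;\leq\; \sum_{k=0}^{\infty} q_k x^k \;=\; G(x) \;\leq\; C x^d
\end{equation*}
for every fixed index $n$.

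Second, I would solve this inequality for $q_n$: for each fixed $n$ and every $x \geq x_0$,
\begin{equation*}
q_n \;\leq\; C\, x^{\,d-n}.
\end{equation*}
If $n > d$ then $x^{d-n} \to 0$ as $x \to \infty$, forcing $q_n \leq 0$, and combined with $q_n \geq 0$ this yields $q_n = 0$. Therefore only the coefficients $q_0, q_1, \dots, q_{\lfloor d \rfloor}$ can be nonzero, so $G$ is a polynomial of degree at most $d$ (in fact at most $\lfloor d \rfloor$).

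There is no real obstacle here; the whole argument is a one-line consequence of non-negativity of the coefficients, which replaces the usual complex-analytic trick of extracting each coefficient from $G$ via a contour integral and estimating by the maximum modulus on a large circle. The only thing to be careful about is the direction of the inequality — we need $|G(x)| \leq C|x|^d$ to translate to an upper bound on each individual term, and this translation is precisely what non-negativity provides.
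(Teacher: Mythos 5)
Your proof is correct and is essentially the same argument as the paper's: both use non-negativity to bound each individual term $q_n x^n$ by $G(x) \leq Cx^d$ along the positive real axis, forcing $q_n = 0$ whenever $n > d$. The paper phrases it as a contradiction (a nonzero $q_m$ with $m > d$ would make $G$ grow faster than $Cx^d$), but the content is identical.
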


\begin{proof}
    Suppose there is some nonzero $q_m$ with $m>d$. For $x \geq 0$ we have $G(x) \geq q_m x^m$, however as $m > d$ this quantity will dominate $Cx^n$ for any constant $C>0$, $n \leq d$ and $x$ sufficiently large. The lemma follows from this.
\end{proof}

\section*{Acknowledgements}

The authors would like to think Xi Geng and Aihua Xia for helpful conversations.

\bibliographystyle{plain}
\bibliography{123}

\begin{thebibliography}{1}

\bibitem{daly2015conway}
F.~Daly and R.~Gaunt.
\newblock The {C}onway-{M}axwell-{P}oisson distribution: distributional theory
  and approximation.
\newblock {\em ALEA}, 13:635–658, 2016.

\bibitem{geng2022conway}
X.~Geng and A.~Xia.
\newblock When is the {C}onway--{M}axwell--{P}oisson distribution infinitely
  divisible?
\newblock {\em Statistics \& Probability Letters}, 181:109264, 2022.

\bibitem{hilbe2014modeling}
J.~Hilbe.
\newblock {\em Modeling count data}.
\newblock Cambridge University Press, 2014.

\bibitem{li2020some}
B.~Li, H.~Zhang, and Jiao He.
\newblock Some characterizations and properties of {COM}-{P}oisson random
  variables.
\newblock {\em Communications in Statistics-Theory and Methods},
  49(6):1311--1329, 2020.

\bibitem{pogany2016integral}
T.~Pog{\'a}ny.
\newblock Integral form of the {COM}-{P}oisson normalization constant.
\newblock {\em Statistics \& Probability Letters}, 119:144--145, 2016.

\bibitem{sellers2012poisson}
K.~Sellers, D.~Borle, and G.~Shmueli.
\newblock The {COM}-{P}oisson model for count data: a survey of methods and
  applications.
\newblock {\em Applied Stochastic Models in Business and Industry},
  28(2):104--116, 2012.

\bibitem{yip2021forecasting}
S.~Yip, Y.~Zou, R.~Hung, and K.~Yiu.
\newblock Forecasting number of corner kicks taken in association football
  using overdispersed distribution.
\newblock {\em arXiv preprint arXiv:2112.13001}, 2021.

\end{thebibliography}
\end{document}